

\documentclass{daj}
\usepackage{hyperref, graphicx,amsmath,amsfonts, amssymb, amsthm, mathrsfs}

\dajAUTHORdetails{%
  title = {Unique continuation on planar graphs}, 
  author ={Ahmed Bou-Rabee ,William Cooperman, and Shirshendu Ganguly},
  plaintextauthor = {Ahmed Bou-Rabee ,William Cooperman, and Shirshendu Ganguly},
    %
    %
  plaintexttitle = {Unique continuation on planar graphs}, 
    %
    %
    %
  copyrightauthor = {A. Bou-Rabee, W. Cooperman, S. Ganguly},
   %
}   

\dajEDITORdetails{%
   year={2025},
   number={16},
   received={13 November 2023},   
   revised={12 September 2024},    
   published={11 September 2025},  
   doi={10.19086/da.144015},       
}   

\newtheorem{theorem}{Theorem}[section]
\newtheorem{prop}[theorem]{Proposition}

\newtheorem{lemma}[theorem]{Lemma}

\newtheorem{remark}[theorem]{Remark}

\newcommand{\G}{\mathcal{G}}
\newcommand{\V}{\mathcal{V}}
\newcommand{\E}{\mathcal{E}}
\newcommand{\cond}{\mathbf{a}}
\newcommand{\eps}{\varepsilon}
\DeclareMathOperator{\R}{\mathbb{R}}

\DeclareMathOperator{\Z}{\mathbb{Z}}
\DeclareMathOperator{\N}{\mathbb{N}}

\newcommand{\eg}{{\it e.g.}}
\DeclareMathOperator{\prev}{\mbox{prev}}
\DeclareMathOperator{\sgn}{\mathop{\bf sgn}}

\begin{document}

\begin{frontmatter}[classification=text]

\title{Unique continuation on planar graphs} 

\author[abou]{Ahmed Bou-Rabee}
\author[wcoop]{William Cooperman}
\author[sgang]{Shirshendu Ganguly}

\begin{abstract}
  We show that a discrete harmonic function which is bounded on a large portion of a periodic planar graph is constant. A key ingredient is a new unique continuation result for the weighted graph Laplacian. The proof relies on the structure of level sets of discrete harmonic functions, using arguments as in Bou-Rabee--Cooperman--Dario (2023) which exploit the fact that, on a planar graph, the sub- and super-level sets cannot cross over each other. In the special case of the square lattice this yields a new, geometric proof of the Liouville theorem of Buhovsky–Logunov–Malinnikova–Sodin (2017).
\end{abstract}
\end{frontmatter}

\section{Introduction}\label{sec:intro}
A periodic planar graph $\G = (\V, \E)$ is a graph for which there exists an embedding into the plane $\R^2$ which is invariant under translation by a two-dimensional-lattice $\mathcal{L}$.
We fix such an embedding and consider the weighted graph Laplacian on $\G$,
\[
\Delta f(u) = \sum_{w \sim u} \cond(u,w)( f(u) - f(w)) \,,
\]
where the sum is over the vertices $w \in \V$ adjacent to $u$ and the conductance, $\cond$, is a strictly positive function on the set of undirected edges $\E$. Designate a vertex closest to zero as the origin and let $B_n$ denote the graph-metric ball of radius $n$ centered at the origin. We write $\mathcal{F} := \G / 2\mathcal{L}$ to denote a (finite) quotient of the graph which, together with $\mathcal{L}$, contains all of the information needed to reconstruct $\G$. Abusing notation, we identify the vertex set $\V$ of $\G$ with its image in $\R^2$ under the embedding.

\begin{figure}
	\begin{center}
	\includegraphics[width=0.18\textwidth]{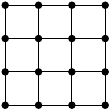} \,
	\includegraphics[width=0.1785\textwidth]{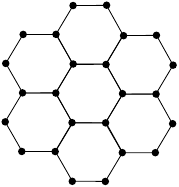} \,
	\includegraphics[width=0.172\textwidth]{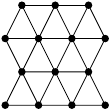} \,
	\includegraphics[width=0.19\textwidth]{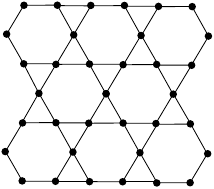} \,
	\end{center}
	\caption{Some periodic planar graphs.}
\end{figure}

Recall that any bounded harmonic function on $\G$ is constant.
Our main result is the following improvement of this Liouville theorem.
\begin{theorem}\label{theorem:liouville}
	Suppose that the conductances $\cond$ are invariant under translation by $\mathcal{L}$. Then there is some $\eps = \eps(\cond) > 0$ such that, if $\Delta f = 0$ on $\G$ and
	\begin{equation*}
		\lim_{n \to \infty} \frac{|\{|f| \leq 1\} \cap B_n|}{|B_n|} \geq 1-\eps \, ,
	\end{equation*}
	then $f$ is constant.
\end{theorem}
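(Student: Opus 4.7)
My plan is to deduce Theorem~\ref{theorem:liouville} from a quantitative unique continuation statement of the following shape: there exist $\eps_0 = \eps_0(\cond) > 0$ and $C_0 = C_0(\cond) < \infty$ such that if $f$ is harmonic on $B_N$ and $|\{|f| \le 1\} \cap B_N| \ge (1-\eps_0)|B_N|$, then $|f| \le C_0$ on $B_{N/2}$. Granted this, the theorem is immediate: applying the statement on $B_n$ for every sufficiently large $n$ shows that $f$ is bounded on all of $\G$, and the classical Liouville property for bounded harmonic functions on periodic planar graphs with strictly positive conductances then forces $f$ to be constant.

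To prove the unique continuation statement I would argue by contradiction, following the level-set strategy of Bou-Rabee--Cooperman--Dario~(2023). Suppose $f$ is harmonic on $B_N$, satisfies the $(1-\eps)$-density hypothesis, and attains $f(v) > M$ at some $v \in B_{N/2}$ with $M$ very large. Let $\mathcal C^+$ denote the connected component of $\{f > 1\}$ in $B_N$ that contains $v$. The aim is to show $|\mathcal C^+| \ge c\, M^{\alpha}$ for constants $c, \alpha > 0$ depending only on $\cond$ and on the combinatorics of the quotient $\mathcal F$; for $M$ large this contradicts the bound $|\mathcal C^+| \le \eps |B_N|$ forced by the density assumption. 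The lower bound on $|\mathcal C^+|$ would be assembled from two ingredients: first, a local maximum-principle / regularity statement that from any vertex where $f \ge M$ there is a nearby vertex where $f$ has dropped by a definite multiplicative factor, so iteration produces a descending chain from $v$ down to the level $\{f \approx 1\}$ whose length grows at least logarithmically in $M$; second, a planar-topology argument showing that this chain cannot remain thin and in fact ``fattens'' into a genuinely large super-level region.

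The planarity input is where I expect the main obstacle. The key structural fact is that on $\G$ the super-level component $\{f > 1\}$ containing $v$ and any sub-level component of $\{f < -1\}$ cannot topologically cross one another without sharing a vertex, and combined with the maximum principle this interlaces the two families of components in a very restrictive way. On the unweighted square lattice this is essentially the mechanism that recovers the Buhovsky--Logunov--Malinnikova--Sodin theorem. Extending the argument uniformly over all periodic planar graphs forces one to work with the finite quotient $\mathcal F = \G / 2 \mathcal L$ to uniformize constants over vertices and directions, and to rule out ``thin, tree-like'' super-level components that might otherwise connect vertices of very large $f$-value to $\partial B_N$ while consuming only a tiny fraction of vertices. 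Producing a clean, quantitative form of this ``no thin super-level chains'' statement in the general weighted planar setting is, to my mind, the technical heart of the proof.
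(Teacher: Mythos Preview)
Your reduction has a genuine gap. The intermediate statement you want---that harmonicity on $B_N$ together with the $(1-\eps_0)$ density bound forces $|f|\le C_0$ on $B_{N/2}$ for a constant $C_0$ \emph{independent of $N$}---is not what level-set arguments deliver, and it is not what the paper proves. Trace your own sketch: a descending chain from a point of value $M$ down to the level $\{f\approx 1\}$ has length at best $\asymp \log M$, and any planar ``fattening'' bound of the form $|\mathcal C^+|\gtrsim (\log M)^2$ (or even $|\mathcal C^+|\gtrsim M^\alpha$) combined with $|\mathcal C^+|\le \eps|B_N|\asymp \eps N^2$ yields only $M\le \exp(C\sqrt{\eps}\,N)$ (respectively $M\le C(\eps N^2)^{1/\alpha}$), a bound that \emph{grows with $N$}. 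You therefore cannot conclude that $f$ is globally bounded, and the appeal to the classical Liouville theorem at the end is unjustified.

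The paper's route is precisely to accept an exponential upper bound of this shape---Theorem~\ref{theorem:uniformly-bounded} gives $\max_{B_n}|f|\le e^{A\sqrt{\eps}\,n}$ via the topological Lemma~\ref{lemma:topological-lemma}, which is in the same spirit as your level-set idea---and then to play it off against a competing exponential \emph{lower} bound: if $f$ is non-constant and satisfies the density hypothesis at all scales, then $\max_{Q_N}|f|\ge e^{bN}$ for a fixed $b>0$ independent of $\eps$ (Theorem~\ref{theorem:lower-bound}). The lower bound is proved via a discrete three-ball inequality, itself resting on polynomial approximation of discrete harmonic functions (Lemma~\ref{lemma:poly-approx}) and a Remez-type inequality. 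For $\eps$ small enough that $A\sqrt{\eps}<b$, the two bounds contradict each other, forcing $f$ to be constant. Your proposal omits this entire lower-bound half of the argument; the ``no thin super-level chains'' issue you flag is real but, even once resolved, leaves you one essential ingredient short.
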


In the special case of $\Z^2$ with unit conductance, Buhovsky--Logunov--Malinnikova--Sodin in~\cite{buhovsky2022discrete}
established Theorem~\ref{theorem:liouville} via a delicate argument involving the polynomial structure of discrete harmonic functions on $\Z^2$. They proved two competing statements. First they showed, using the three-ball inequality, that a non-constant discrete harmonic function which is bounded on most of $\Z^2$ must grow at least exponentially. This result is quite general and, as we indicate in the appendix, also holds (albeit with a different, simpler proof) for periodic graphs. Second, using the square lattice structure, they proved an exponential upper bound for the growth of such a function, with an exponent which tends to zero as $\eps \to 0$.  One of their key observations is that a discrete harmonic function on $\Z^2$, which vanishes on two parallel diagonals, is equal (up to a sign) to a polynomial on subsequent diagonals. Unfortunately, this polynomial structure is no longer present on general planar graphs, or even on $\Z^2$ when the conductances are not constant.

Our main contribution is a completely new proof of this exponential upper bound. This may also be thought of as a unique continuation result. The argument is based on a topological property of the level sets of planar discrete harmonic functions and is therefore robust to changes in the underlying graph. Similar topological arguments were previously used by Dario and the first two authors in a study of harmonic functions on the supercritical percolation cluster~\cite{bou2023rigidity}.

Some basic features of non-crossing level sets in the plane have been used previously, \eg, in Serrin's proof of the Harnack inequality \cite{MR0081415}. We take a more quantitative approach and study the structure of the intersection points between sub- and super-level sets of zero. On the one hand, planarity forces these two sets to interleave and this ensures the existence of many distinct connected components of each set. On the other, each component intersects at least one vertex on the boundary of the domain, so there cannot be too many.
\begin{theorem}\label{theorem:uniformly-bounded}
	Suppose the conductances are uniformly bounded via $0 < \lambda < \cond < \Lambda < \infty$.
	Then there exist positive constants $\eps_0(\mathcal{F}, \Lambda/\lambda)$, $n_0(\mathcal{F})$, and $A(\mathcal{F}, \Lambda/\lambda)$
	such that for all $n \geq n_0$ and $\eps < \eps_0$,
	if $\Delta f = 0$ on $B_{2 n}$
	and $|\{x \in B_{2n} \mid  |f(x)| > 1\}| \leq \eps |B_{2n}|$, then $\max_{B_{n}} |f| \leq e^{(A \sqrt{\eps}) n} $.
\end{theorem}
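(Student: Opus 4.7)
The plan is a multi-scale analysis of the level sets of $f$ carried out shell by shell, combining a per-shell amplification estimate---where planarity enters---with a Cauchy--Schwarz step across scales. Set $M_k := \max_{B_k}|f|$, $S_k := B_{k+1}\setminus B_k$, and $\delta_k := |\{|f|>1\}\cap S_k|/|S_k|$ for $n \leq k < 2n$; periodicity of the embedding yields $|S_k|\asymp n$, so the hypothesis gives $\sum_k \delta_k \lesssim \eps n$.

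The core lemma is the per-shell amplification
\begin{equation*}
  \log \frac{M_{k+1}}{M_k} \leq C \sqrt{\delta_k}, \qquad C = C(\mathcal{F}, \Lambda/\lambda),
\end{equation*}
whose proof is the main geometric content. To establish it, fix a constant $K > 1$ strictly larger than the maximum multiplicative jump of $|f|$ across any edge (bounded in terms of the degree of $\mathcal{F}$ and $\Lambda/\lambda$), and choose thresholds $t_j = K^j M_k$ for $j = 1, \ldots, J \asymp \log_K(M_{k+1}/M_k)$. Because $t_j > M_k$, every connected component of $\{f \geq t_j\} \cap B_{k+1}$ lies inside $S_k$, and by the maximum principle each touches $\partial B_{k+1}$. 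The gap $K$ ensures the inner boundaries of the components at distinct $j$ are disjoint subsets of the bad set $\{|f|>1\}\cap S_k$. Now planarity enters: since sub- and super-level sets of $f$ cannot cross in the planar embedding of $\G$, these boundaries organize into $J$ nested discrete Jordan curves on the topological annulus $S_k$, and a discrete isoperimetric estimate on $S_k$ converts the budget $\delta_k|S_k|$ of bad vertices into the quadratic bound $J\lesssim \sqrt{\delta_k}$.

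Telescoping the shell estimate and applying Cauchy--Schwarz gives
\begin{equation*}
  \log M_{2n} - \log M_n \;\leq\; C\sum_{k=n}^{2n-1}\sqrt{\delta_k} \;\leq\; C\sqrt{n}\Bigl(\sum_k \delta_k\Bigr)^{1/2} \;\leq\; C'\sqrt{\eps}\, n.
\end{equation*}
To close the bound, pigeonhole on the radial slices produces a radius $r \in [n, 2n]$ whose boundary $\partial B_r$ has bad density $\lesssim \eps$; the discrete Poisson representation of $f$ on $B_r$, combined with Harnack-type uniform lower bounds on harmonic measure seen from $B_n$, yields $M_n \leq C(1 + \eps M_r)$. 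Substituting the shell chain $M_r \leq e^{C'\sqrt{\eps}\, n} M_n$ and rearranging gives $M_n \leq e^{A\sqrt{\eps}\, n}$ for $\eps < \eps_0$ small enough and $n \geq n_0$.

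The main obstacle is the square-root per-shell amplification. A naive count of witnesses gives only $J \lesssim \delta_k$, which would produce the weaker $\eps$-rate; recovering $\sqrt{\delta_k}$ requires using the planar non-crossing topology quantitatively, since a single bad vertex can participate in many nested boundaries only if the discrete Jordan curves are allowed to cross. Making this rigorous in a way that is uniform in the combinatorial type $\mathcal{F}$ and in the conductance ratio $\Lambda/\lambda$, while accommodating the irregular geometry of general periodic planar graphs, is the principal technical task of the proof.
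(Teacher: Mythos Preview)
Your argument has two genuine gaps. The first and more serious is the per-shell amplification $\log(M_{k+1}/M_k)\le C\sqrt{\delta_k}$, which you identify as the main obstacle but do not actually establish. The shell $S_k=B_{k+1}\setminus B_k$ has width one in the graph metric, so there is no isoperimetric inequality on $S_k$ that turns a vertex budget into a quadratic bound on the number of nested level curves; if the $J$ ``inner boundaries'' are disjoint subsets of the bad set you get at best $J\lesssim\delta_k|S_k|$, and if they are forced to wrap around the annulus you get $J\lesssim\delta_k$ --- neither is $\sqrt{\delta_k}$. In fact, since a single edge can change $|f|$ by at most a bounded factor, $J\asymp\log_K(M_{k+1}/M_k)$ is always $O(1)$, so the content of your claim is that $M_{k+1}/M_k\to 1$ as $\delta_k\to 0$; but a single bad vertex on $\partial B_{k+1}$ (with its large neighbours sitting in $S_{k+1}$) already gives $\delta_k\asymp 1/n$ while $\log(M_{k+1}/M_k)$ stays of order one. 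The second gap is the closing step: from $M_n\le C(1+\eps M_r)$ and $M_r\le e^{C'\sqrt{\eps}\,n}M_n$ you can rearrange only when $C\eps\,e^{C'\sqrt{\eps}\,n}<1$, and for any fixed $\eps>0$ this fails once $n$ is large, so the bootstrap does not close.

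The paper obtains the $\sqrt{\eps}$ exponent by a completely different mechanism, with no shell-by-shell analysis and no Cauchy--Schwarz. It first proves a \emph{fixed} exponential bound $\max_{B_n}|f|\le C\alpha^n$, valid whenever $\eps\le\eps_1$, by a single application of the planar topological lemma: a pigeonhole over geometric scales produces one threshold $A\in[1,K^n]$ at which almost no vertex has $|f|$ in the gap $(A,2\Theta d\,A)$, so $\{|f|<A\}$ can play the role of the zero set in the argument for Theorem~\ref{theorem:zero-case}, forcing $|f|\le A$ on $B_n$. The $\sqrt{\eps}$ then comes from rescaling: cover $B_n$ by balls of radius $\delta n$ with $\delta\asymp\sqrt{\eps/\eps_1}$, so that the bad density in each small ball is at most $\eps_1$, and apply the fixed bound on each small ball to get $\max_{B_n}|f|\le C\alpha^{\delta n}=e^{A\sqrt{\eps}\,n}$. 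No Poisson/Harnack closing step is needed.
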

Following the arguments in~\cite{buhovsky2022discrete}, the exponential lower bound, Theorem~\ref{theorem:lower-bound}, together with the exponential upper bound, Theorem~\ref{theorem:uniformly-bounded}, establishes Theorem~\ref{theorem:liouville}.

The same argument to prove Theorem~\ref{theorem:uniformly-bounded} can be used to provide a short proof of Theorem~\ref{theorem:liouville}
(with no assumptions on $\cond$ other than positivity) when the discrete harmonic function is not just bounded but also vanishes on a large portion of the graph.
\begin{theorem}\label{theorem:zero-case}
	There exist positive constants $\eps_0(\mathcal{F})$ and $n_0(\mathcal{F})$ such that if $\Delta f = 0$ on $B_{2 n}$ for $n \geq n_0$
	and $|\{x \in B_{2n} \mid |f(x)| \neq 0\}| \leq \eps |B_{2n}|$, for $\eps < \eps_0$, then $f \equiv 0$ on $B_n$.
\end{theorem}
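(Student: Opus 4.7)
The plan is to argue by contradiction: assume $f \not\equiv 0$ on $B_n$, and write $\mathcal{B} := \{f \neq 0\} \cap B_{2n}$. Since $\G$ is a periodic planar graph, $|B_n|/|B_{2n}|$ is bounded below by a positive constant $c_{\mathcal{F}}$ depending only on the fundamental domain, so I may fix $\eps_0 < c_{\mathcal{F}}/2$; the hypothesis $|\mathcal{B}| \leq \eps |B_{2n}|$ then gives $|\mathcal{B}| < |B_n|/2$, and in particular $B_n$ contains a zero of $f$ at an interior vertex (one all of whose neighbors lie in $B_n$). If $f$ were of one sign on $B_n$, the strong maximum principle, applied to the harmonic function $f$ on $B_n$ with an interior zero, would force $f$ to be constant and hence $f \equiv 0$ on $B_n$, contradicting the assumption. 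So $f$ must attain both strictly positive and strictly negative values on $B_n$, at some vertices $x_+, x_-$.

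\textbf{Sign components touch $\partial B_{2n}$.} Let $C_+$ and $C_-$ be the connected components of $\{f > 0\} \cap B_{2n}$ and $\{f < 0\} \cap B_{2n}$ containing $x_+$ and $x_-$. I claim that every component $K$ of $\{f > 0\} \cap B_{2n}$, and symmetrically of $\{f < 0\} \cap B_{2n}$, must contain a vertex of $\partial B_{2n}$. Indeed, if not, every vertex of the graph-boundary $\partial K$ lies in $B_{2n}$ and, by definition of the component, satisfies $f \leq 0$; the discrete maximum principle applied to $f$ harmonic on the finite connected subgraph $K$ with nonpositive boundary data then gives $\max_K f \leq 0$, contradicting $K \subset \{f > 0\}$. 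Consequently, $C_+$ and $C_-$ are two disjoint connected subsets of $\mathcal{B}$, each linking $B_n$ to $\partial B_{2n}$ and hence of size at least $n$.

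\textbf{Planar interleaving and contradiction.} The heart of the proof is the topological argument advertised in the introduction. In the planar embedding of $\G$, the disjoint subsets $C_+$ and $C_-$ each cross the annulus $B_{2n} \setminus B_n$ from inside to outside. I would exploit the non-crossing property of planar level sets by examining the cyclic sign pattern of $f$ on concentric cycles of vertices at graph-distance $r \in [n, 2n]$ around the origin: the topological interleaving of $C_\pm$ forces such a cycle to exhibit at least one alternation between $+$ and $-$ regions, and each alternation is realized by a nonzero vertex. Using harmonicity at the intervening zero vertices---namely, the identity that the signed weighted sum of nonzero neighboring values vanishes, which rules out degenerate ``thin'' sign configurations---one promotes each alternation to a genuinely distinct sign-component, which by the argument of the previous paragraph must itself reach $\partial B_{2n}$ and so contribute $\gtrsim n$ further vertices to $\mathcal{B}$. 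Aggregating across the $\asymp n$ available scales yields $|\mathcal{B}| \geq c(\mathcal{F}) |B_{2n}|$; choosing $\eps_0$ smaller than $c(\mathcal{F})$ contradicts $|\mathcal{B}| \leq \eps |B_{2n}|$ and proves the theorem. The main obstacle is precisely this quantitative interleaving: a naive sign-alternation count on a single cycle only produces $O(1)$ components and hence $O(n)$ bad vertices, which is insufficient for a uniform $\eps_0$. To close the gap, one must combine the Jordan-curve structure of the planar embedding with the local Laplacian identities at zero vertices to ensure that the components counted at different radii are genuinely distinct and do not collapse onto the two already-identified components $C_\pm$.
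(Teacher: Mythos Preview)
Your proposal has a genuine gap, and you essentially flag it yourself in the final paragraph. The first two paragraphs are fine, but they only produce two disjoint sign components $C_\pm$, each of size $\geq n$, hence $|\mathcal{B}| \geq 2n$. Since $|B_{2n}| \asymp n^2$, this is compatible with arbitrarily small $\eps$ and gives no contradiction. Your third paragraph proposes to fix this by counting sign alternations on many concentric cycles and aggregating, but you never actually show that the components witnessed at different radii are distinct; as you say, they could all collapse onto the same $C_\pm$. The sentence ``one must combine the Jordan-curve structure \dots\ to ensure that the components \dots\ are genuinely distinct'' is a description of what is needed, not an argument. As it stands the proof does not close.

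The paper avoids this difficulty by a different mechanism. First, a pigeonhole over radii $m \in [1.5n, 2n]$ picks a single sphere $\partial B_m$ carrying at most $C\eps n$ nonzero vertices; this converts the global bound $\eps n^2$ into a linear bound on one shell. Second, rather than working on metric spheres, the paper builds a cycle $\gamma$ as the outer boundary (in $\G$) of the connected component of ``faces touching a nonzero vertex'' in the dual graph $\G^*$ that contains $x_0$. Every vertex of $\gamma$ not on $\partial B_m$ is then a zero of $f$ lying on a face with a nonzero vertex, which is exactly the input to the topological Lemma~\ref{lemma:topological-lemma}; that lemma says such zeros can occupy at most an $\alpha_{\mathcal{F}}$-fraction of $\gamma$. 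Hence a fixed fraction of $\gamma$, which has length $\geq n/2$, sits on $\partial B_m$ and is adjacent to nonzero vertices, giving $\gtrsim n$ nonzeros on $\partial B_m$ and contradicting the pigeonhole bound $C\eps n$. The two ideas you are missing are (i) the pigeonhole reduction to a single good shell, so that the target is $cn$ versus $C\eps n$ rather than $cn^2$ versus $\eps n^2$, and (ii) the use of the boundary of the nonzero-face cluster, not a metric sphere, as the cycle on which the planar interleaving argument is run.
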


Our theorems require the discrete two-dimensional structure as encapsulated in Lemma~\ref{lemma:topological-lemma} below.
As explained in \cite[Remark 1.3]{buhovsky2022discrete}, there are counterexamples on $\Z^d$ for $d \geq 3$ and on $\R^2$.
For instance, the function $(x,y,z) \to c^z (-1)^x  1_{\{x = y \}}$ for $c$ satisfying $c + c^{-1} = 6$ is harmonic on $\Z^3$. Planarity is also essential --- as we show in Section~\ref{sec:counterexample}, there is a family of non-planar conductances on $\Z^2$ for which Theorem~\ref{theorem:zero-case} fails.

An important application of unique continuation results of this form is Anderson localization. Ding--Smart~\cite{ding2020localization} used ideas from~\cite{buhovsky2022discrete} as input into the program of Bourgain--Kenig~\cite{bourgain2005localization} to prove localization near the edge for the Anderson--Bernoulli model on $\Z^2$.
This result was generalized to $\Z^3$ by Li--Zhang~\cite{li2022anderson}, where, as part of their argument, they proved a version of Theorem~\ref{theorem:uniformly-bounded} on the triangular lattice using ideas from~\cite{buhovsky2022discrete}.

\smallskip

Throughout $c, C$ denote positive constants that may differ in each instance. Dependence is indicated by, \eg, $C(d)$.

\section{A topological lemma}\label{sec:topology}

We give a geometric constraint on the level sets of a discrete harmonic function $f$ on a planar graph. This is the key topological observation underlying our arguments. One may think of the set $Z$ as the zero set of $f$, the set $P$ as the zero superlevel set (pluses), and the set $M$ as the zero sublevel set (minuses).
Roughly, the conclusion is that $f$ cannot have too many zeros on a circle without being identically zero in the half ball.

\begin{figure}[!ht]
	\begin{center}
		\includegraphics[width=0.35\textwidth]{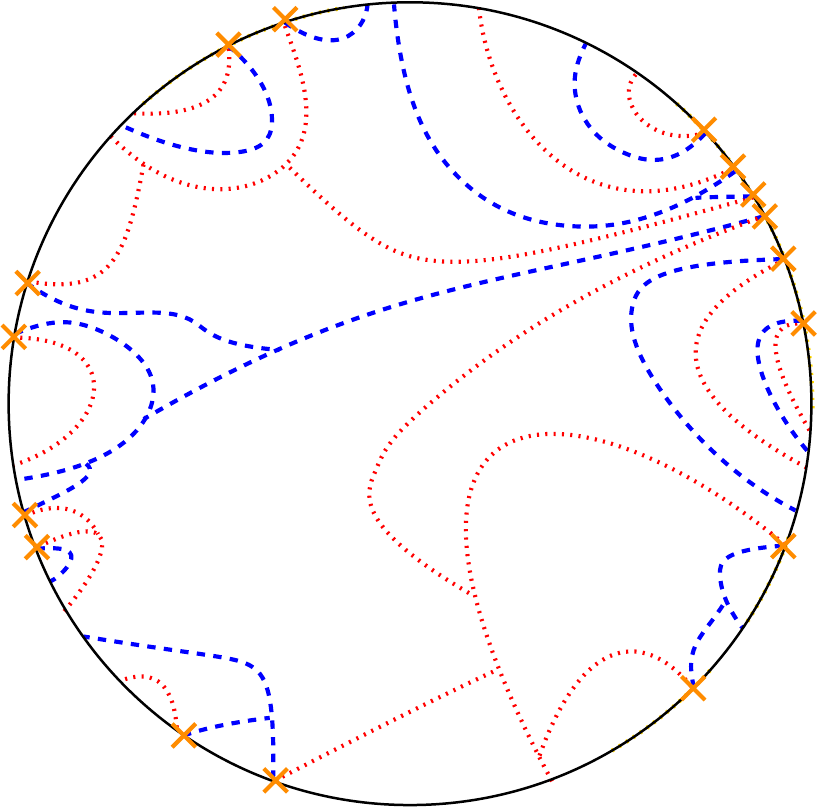}
		\caption{Sets satisfying the properties of Lemma~\ref{lemma:topological-lemma}. The cycle $\gamma$ is drawn as a black circle, the set $Z$ as dark orange crosses, and sets $P$ and $M$ as red dotted and blue dashed lines respectively.}\label{fig:inter-lines}
	\end{center}
\end{figure}

\begin{lemma}\label{lemma:topological-lemma}
	Let $\gamma := \{v_0, v_1, \dots, v_k = v_0\}$ be a cycle in $\G$ and let sets $Z, P, M \subseteq \V$ be disjoint and satisfy the following properties.
	\begin{enumerate}
		\item $Z \subseteq \gamma$.
		\item $P$ and $M$ are contained in the union of $\gamma$ and the finite component of $\G \setminus \gamma$.
		\item For every $z \in Z$, there is a path $\beta_z := \{z = w_0, w_1, \dots, w_\ell\}$ along a face of $\G$ adjacent to $z$, such that $\beta_z \cap (P \cup M) = \emptyset$ and $w_\ell$ is adjacent to vertices $p(z) \in P$ and $m(z) \in M$. If there is ambiguity, we choose $\beta_z$, $p(z)$ and $m(z)$ arbitrarily.
		\item Every connected component of the induced subgraph $\G[P]$ (respectively of $\G[M]$) intersects $\gamma \setminus Z$.
	\end{enumerate}
	Then, for a constant $\alpha_{\mathcal{F}} \in (0,1)$ depending only on the maximum size of a face in the graph $\G$, we have $|Z| \leq \alpha_{\mathcal{F}} |\gamma|$.
\end{lemma}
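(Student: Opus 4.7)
The plan is to encode the hypotheses as a planar auxiliary graph drawn inside the topological disk bounded by $\gamma$, and then bound $|Z|$ by $|\gamma|$ via Euler's formula together with the non-crossing structure of disjoint connected subgraphs in the disk.

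Let $D$ denote the closed topological disk consisting of $\gamma$ together with the finite component of $\G \setminus \gamma$. For each $z \in Z$, let $\pi(z)$ denote the connected component of the induced subgraph $\G[P]$ containing $p(z)$, and $\mu(z)$ the component of $\G[M]$ containing $m(z)$; both lie in $D$. By property~(4) each component of $\G[P]$ (and each component of $\G[M]$) hits $\gamma \setminus Z$ at a distinct vertex, so the total number $r_P + r_M$ of components satisfies $r_P + r_M \leq |\gamma \setminus Z|$. The face-following path $\beta_z$ of property~(3), concatenated with the edges $w_\ell p(z)$, $w_\ell m(z)$ and a short path inside each of $\pi(z), \mu(z)$, traces out a polyline $\Gamma_z \subset D$ from $\pi(z)$ through $z \in \gamma$ to $\mu(z)$. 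Since each $\Gamma_z$ is realized as an actual walk in the planar graph $\G$, the family $\{\Gamma_z\}_{z \in Z}$ is drawn without crossings.

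Next, I would contract each connected component of $\G[P]$ and $\G[M]$ to a single point in $D$ to obtain a plane multigraph $H$ whose vertex set is the $r_P + r_M$ contracted points together with the $|\gamma|$ vertices of $\gamma$, and whose edges are the $|\gamma|$ cycle edges plus the two halves $\pi(z)$--$z$ and $z$--$\mu(z)$ of each $\Gamma_z$. No two of these $\Gamma$-halves share both endpoints (different $z$'s yield different ``$z$'' endpoints, and $P \cap M = \emptyset$ rules out $\pi(z) = \mu(z)$), so the $\Gamma$-edges are simple; multi-edges can arise only between a contracted component and a single outside vertex to which that component had several $\G$-edges. Applying Euler's formula to $H$ together with a lower bound on face lengths (each face of length at least three, once digons are handled), and substituting $r_P + r_M \leq |\gamma| - |Z|$, rearranges to the desired $|Z| \leq \alpha_{\mathcal{F}} |\gamma|$ for some $\alpha_{\mathcal{F}} < 1$ depending only on the maximum degree of $\G$.

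The principal obstacle I expect is controlling the digons created by contraction. Each such pair of parallel edges cuts off a subdisk of $D$, and I would argue that every such pocket must contain at least one additional vertex of $\gamma \setminus Z$, because any $P$- or $M$-vertex trapped in the pocket would sit in a component with no access to $\gamma \setminus Z$, violating property~(4); while the possibility of empty pockets is ruled out by the bounded face complexity of $\mathcal{F}$ (used in property~(3)). Packaging this topological claim into the Euler count is the heart of the proof, and it is the non-crossing arrangement of disjoint $P$- and $M$-components attached to the boundary of the planar disk that makes everything go through.
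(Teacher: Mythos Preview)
Your approach via Euler's formula on a contracted auxiliary graph is genuinely different from the paper's. The paper instead traces around $\gamma$ and uses the Jordan curve theorem directly: it first thins $Z$ to a subset $Z' \subseteq Z$ on which the $p(z)$'s and $m(z)$'s are pairwise distinct (this is where the max-degree factor enters), then for each $z \in Z'$ records the ``previous'' vertex on $\gamma$ lying in the $P$-component of $p(z)$ (or the $M$-component of $m(z)$), and argues by an iterative Jordan-curve enclosure that at least $|Z'|/2$ of these previous vertices are distinct elements of $\gamma \setminus Z$.

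Your plan has a real gap at the planarity step. The assertion that $\{\Gamma_z\}_{z\in Z}$ is ``drawn without crossings'' because each $\Gamma_z$ is a walk in the planar graph $\G$ does not do the work you need: two such walks can share interior vertices and edges (for instance, distinct $z, z' \in Z$ may border the same face, so $\beta_z$ and $\beta_{z'}$ overlap; or $p(z) = p(z')$ outright), and once you pass to abstract edges in $H$ you must actually exhibit a planar drawing with the $\Gamma$-halves internally disjoint. Relatedly, your vertex count double-counts: vertices of $\gamma \setminus Z$ that lie in $P \cup M$ are absorbed into the contracted component points, so $|V(H)| \neq (r_P + r_M) + |\gamma|$, and the cycle $\gamma$ itself may collapse under contraction. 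Even granting a corrected planar model, you never carry out the Euler computation, and it is unclear where the max-degree dependence would enter your bound; in the paper it comes from the thinning $Z \mapsto Z'$, for which your sketch has no analogue. Finally, the digon issue you flag as the principal obstacle is left at the level of an intention, and the claim that empty pockets are excluded by ``bounded face complexity of $\mathcal{F}$'' is not substantiated. As written this is a plausible heuristic outline, not a proof.
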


\begin{proof}
	For each $z \in Z$, we define $\prev_P(z) \in \gamma \setminus Z$ (resp.\ $\prev_M(z)$) to be the $v_{j}$, where $j$ is smallest in $\{1, \dots, k\}$ such that $v_{j}$ lies in the connected component of $p(z)$ in $\G[P]$ (resp. $m(z)$ in $\G[M]$).
	
	Let $Z = \{v_{j_1}, v_{j_2}, \dots, v_{j_{|Z|}}\}$, where $0 \leq j_1 < j_2 < \cdots < j_{|Z|} < k$
	and choose a maximal subset $Z' = \{v_{i_1}, v_{i_2}, \dots, v_{i_{|Z'|}}\}$ so that $p(z) \neq p(\tilde z)$ and $m(z) \neq m(\tilde z)$ and $\beta_z \cap \beta_{\tilde z} = \emptyset$ for $z \neq \tilde z \in Z'$.
	By (3), we have
	\begin{equation} \label{eq:bound-on-density}
		\frac{|Z'|}{|Z|} \geq {(\mbox{size of largest face in $\G$})}^{-4} \, .
	\end{equation}
	Indeed, for each $z \in Z$, the vertices $p(z)$,~$m(z)$, and all vertices in $\beta_z$ are within face-distance $2$ of $z$; that is, $z$ lies on the same face as a vertex which itself lies on the same face as $p(z)$ (respectively $m(z)$). We can therefore construct $Z'$ greedily, by arbitrarily choosing $z \in Z$ to belong to $Z'$, and then removing all vertices $z' \in Z$ within face-distance $4$ of $z$, so that both $p$ and $m$ are injective on the domain $Z'$
	and the paths~$\{\beta_z\}_{z \in Z'}$ are pairwise disjoint. Since the number of vertices within face-distance $4$ of any vertex in $\G$ is at most $(\mbox{size of largest face in $\G$})^4$, we have proven inequality~\eqref{eq:bound-on-density}.
	
	We trace out the cycle $\gamma$ and argue, using planarity, that each time we encounter a vertex in $Z'$, there must be a new vertex in $\gamma \setminus Z$.  To that end, for each $j = 0, \dots, |Z'|$, we iteratively build sets $S_j \subseteq \gamma \setminus Z$ as follows. Start with $S_0 := \emptyset$. Given $j \geq 0$, consider three cases.
	\begin{enumerate}
		\item If $\prev_P(v_{i_j}) \not\in S_j$ then set $S_{j+1} := S_j \cup \{\prev_P(v_{i_j})\}$.
		\item Otherwise, let $k < j$ be the smallest index such that $\prev_P(v_{i_k}) = \prev_P(v_{i_j})$. If $k = j-1$, then set $S_{j+1} := S_j$.
		\item Otherwise, we set $S_{j+1} := S_j \cup \{\prev_M(v_{i_{j-1}})\}$.
	\end{enumerate}
	
	We claim that in each instance of case (1) or (3) we are only adding vertices which are not already in the set. The result for case (1) is immediate, so we need only argue that in case (3), we have $\prev_M(v_{i_{j-1}}) \not \in S_j$.
	
	Suppose for contradiction that $\prev_M(v_{i_{j-1}}) \in S_j$, so it was added in some previous step, say step $j'$. We therefore have $\prev_M(v_{i_{j'-1}}) = \prev_M(v_{i_{j-1}})$. The fact that step $j'$ fell through to case (3) means that there is $k' < j'-1$ with $\prev_P(v_{i_{k'}}) = \prev_P(v_{i_{j'}})$. From these equalities, we deduce the existence of a path $\xi \subseteq \G[M]$ from $m(v_{i_{j'-1}})$ to $m(v_{i_{j-1}})$ and a path $\zeta \subseteq \G[P]$ from $p(v_{i_{j'}})$ to $p(v_{i_{k'}})$. From these, we construct the paths\footnote{where $\alpha \cdot \beta$ denotes concatenation of paths (first following $\alpha$ and then following $\beta$) and the inverse denotes the path in reverse order} $\gamma_M := \beta_{v_{i_{j'-1}}} \cdot \xi \cdot \beta_{v_{i_{j-1}}}^{-1}$ from $v_{i_{j-1}}$ to $v_{i_{j'-1}}$ and $\gamma_P := \beta_{v_{i_{j'}}} \cdot \zeta \cdot \beta^{-1}_{v_{i_{k'}}}$ from $v_{i_{j'}}$ to $v_{i_{k'}}$. Since the paths $\{\beta_z\}_{z \in Z'}$ are pairwise disjoint and the paths $\xi \subseteq M$ and $\zeta \subseteq P$ are disjoint, we conclude that $\gamma_M$ and $\gamma_P$ are disjoint. But this is a contradiction to the Jordan curve theorem, because the endpoints of $\gamma_M$ and $\gamma_P$ alternate around $\gamma$ (as~$k' < j'-1 < j' < j-1$; see Figure~\ref{fig:third-case}).
	
	To conclude, we observe that case (2) cannot have more occurrences than case (1), so the total number of vertices we add is $|S_{|Z'|}| \geq \frac{1}{2}|Z'|$. Combining this bound with~\eqref{eq:bound-on-density}, we see that $\alpha_F := {\left(1+\frac{1}{2}{(\mbox{size of largest face in $\G$})}^{-4}\right)}^{-1}$ is satisfactory.
\end{proof}

\begin{figure}
	\begin{center}
	\includegraphics[width=0.5\textwidth]{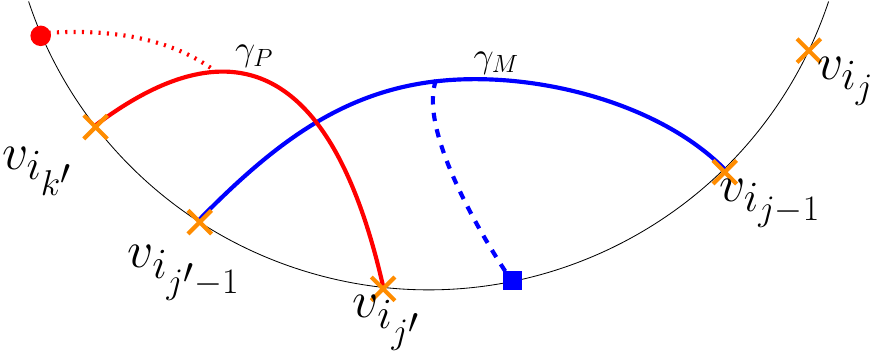}
	\caption{The contradiction in case 3 of the proof of Lemma~\ref{lemma:topological-lemma}. 
		The red circle is~$\prev_P(v_{i_{k'}}) = \prev_P(v_{i_{j'}})$ and the blue square is~$\prev_M(v_{i_{j-1}}) = \prev_M(v_{i_{j'-1}})$.
		The same color scheme as Figure~\ref{fig:inter-lines} is used
		and the paths~$\gamma_P$ and~$\gamma_M$ are red and blue solid lines respectively.
	}\label{fig:third-case}
	\end{center}
\end{figure}

\section{Many zeros implies identically zero}\label{sec:zero-case}
Let $\G^*$ be the dual multigraph of $\G$ whose vertices are the faces of $\G$ and whose edges correspond to adjacent faces of $\G$.
Denote by $\partial B_m$ the set of faces in $\G^*$ which are adjacent to both $B_m$ and its complement. Note that by periodicity of~$\mathcal{G}$, the cardinality of~$B_m$ grows as~$m^2$ and that of its boundary,~$\partial B_m$, as~$m$

We prove Theorem~\ref{theorem:zero-case} in steps. In the first two steps, we reduce to the case where $\G^*$ is a graph (not a multigraph) and there are few nonzeros on the boundary of $B_n$. The third step is an argument by contradiction: if there were a nonzero vertex in $B_{n/2}$, then by Lemma~\ref{lemma:topological-lemma} this would force many nonzeros on the boundary of $B_{n}$.
\smallskip

{\it Step 1: Reduction to case when $\G^*$ is a graph.} \\
This step is a technical reduction; for sufficiently connected graphs, the dual multigraph $\G^*$ is already a graph and we may proceed directly to the next step. We refer the interested reader to Diestel~\cite[Section 4.6]{diestel2017graph} for graph theory definitions and an exposition of planar graphs and duality.

First, we may assume without loss of generality that $\G$ is 2-edge connected\footnote{a graph is $k$-edge connected if it is impossible to make the graph disconnected by removing fewer than $k$ edges; a~$k$-edge connected component is a maximal~$k$-edge connected subgraph}. Indeed, since~$\G$ is connected and periodic, it contains one infinite~$2$-edge connected component
possibly connected via bridges to finite~$2$-edge connected components.   
By the maximum principle, any harmonic function is constant on finite 2-edge connected components of $\G$. Thus, we can delete these components and apply Theorem~\ref{theorem:zero-case} to the resulting graph.

Our next concern is that $\G^*$ can, in general, have multiple edges between the same two vertices. Supposing this was the case, there would be (\eg, by~\cite[Proposition 4.6.2]{diestel2017graph}) a finite induced subgraph $K$ of $\G$ which is 3-edge connected
and for which there are only two edges, $e_1$ and $e_2$, which connect $K$ to $\G \setminus K$. Let $\G'$ be the graph $\G$, with $K$ replaced by a single edge connecting the endpoints of $e_1$ and $e_2$ in $\G \setminus K$. The weight of this new edge is given by the effective conductance\footnote{see~\cite[Section 3.4]{Doyle_Snell_1984}} of $K$ between the endpoints. Then any harmonic function on $\G$ is also harmonic when restricted to $\G'$. Furthermore, since $\G$ is periodic, we can repeat this process for each finite 3-edge connected component of $\G$ and only remove a constant fraction of vertices from $\G$. Modifying the $\eps_0$ in Theorem~\ref{theorem:zero-case} appropriately according to this fraction, we see that proving the theorem for $\G'$ implies the result for $\G$, and therefore we assume that $\G^*$ is a graph without loss of generality.
\smallskip

{\it Step 2: Reduction to few non-zeros on the boundary.} \\
By the pigeonhole principle and the assumption that $f = 0$ on a $(1-\eps)$ portion of $B_{2 n}$, we may assume that there is some $\partial B_{m}$ for $m \in [1.5 n, 2 n]$ such the number of vertices for which $f(v) \neq 0$ and which are adjacent to a face in $\partial B_{m}$ is at most $C \eps n$.
\smallskip

{\it Step 3: Bounding the number of zeros.} \\
\begin{figure}
	\begin{center}
		\includegraphics[width=0.45\textwidth]{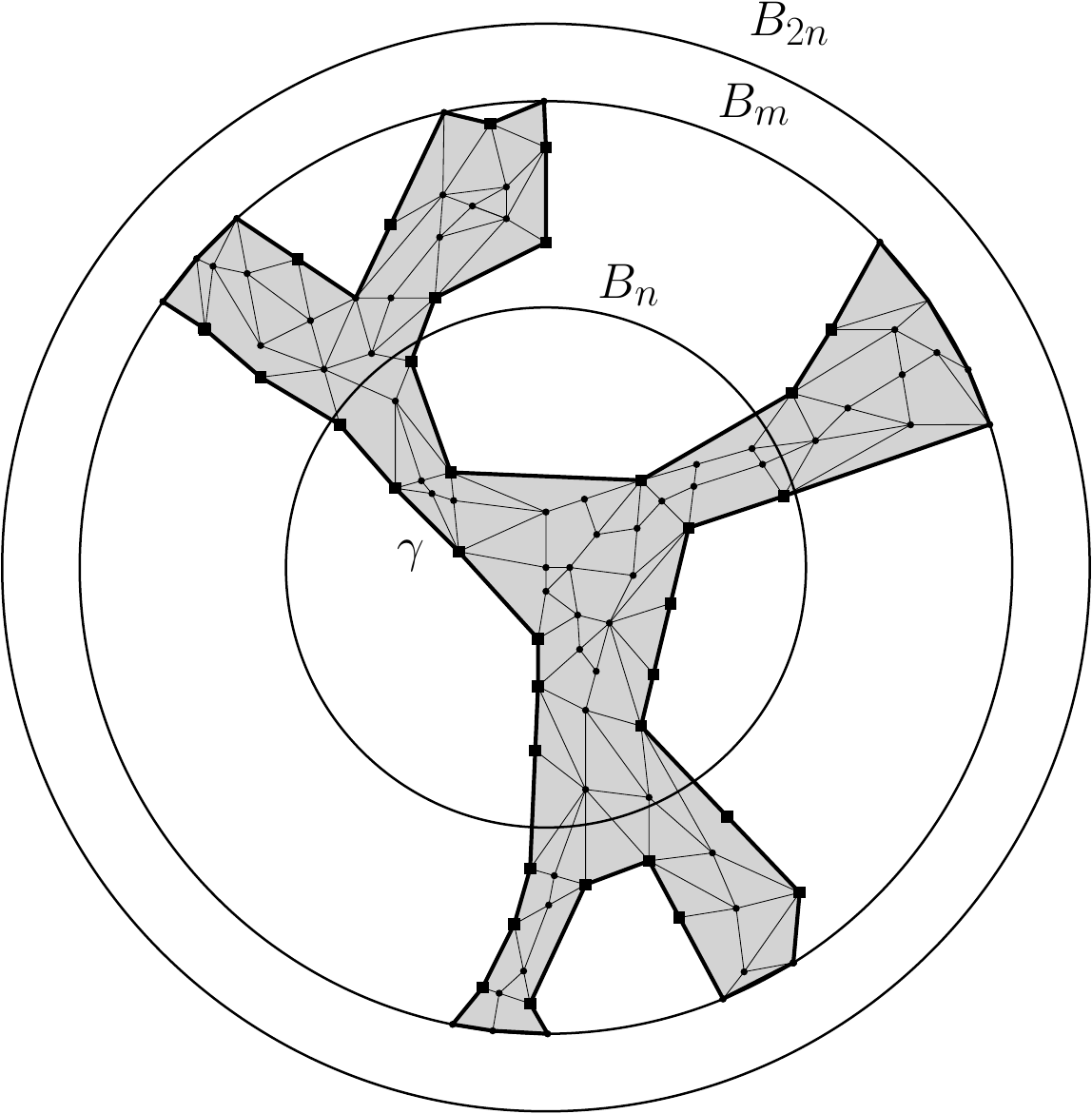}
	\end{center}
	\caption{A cartoon of Step 3; the cycle $\gamma$ is drawn as thick black lines, the vertices in $Z$ are drawn as boxes, and $D$ is the set of vertices inside $\gamma$.
	}\label{fig:step3figure}
\end{figure}
Let $m \in [1.5 n, 2 n]$ be given by Step 2.
Assume for contradiction that there is some $x_0 \in B_n$ with $f(x_0) \neq 0$. Let $N$ be the set of faces of $\G[B_{m}]$ which are adjacent to at least one vertex where $f$ is nonzero. Then the maximal connected component $K \subseteq N$ in $\G^*$ which contains every face adjacent to $x_0$ also contains, by the maximum principle, a face in $\partial B_{m}$.
Let $\gamma := \{v_0, v_1, \dots, v_k = v_0\}$ be a cycle in $\G$ around the boundary of $K \cup \{\text{vertices in finite connected components of $\G^* \setminus K$}\}$.

Let $D$ be the unique finite connected component of $\G \setminus \gamma$. First, we note that $D$ contains $x_0 \in B_n$ and a vertex adjacent to $\partial B_{m}$, so it has diameter at least $n/2$ and therefore its boundary, $\gamma$, has length at least $n/2$. Our goal is to find suitable sets $Z, P, M$ to apply Lemma~\ref{lemma:topological-lemma}. Let $Z := \gamma \setminus \{\mbox{vertices adjacent to $\partial B_m$}\}$ and let $P := \{x \in \gamma \cup D \mid f(x) > 0\}$ and $M := \{x \in \gamma \cup D \mid f(x) < 0\}$.

By maximality, $f(z) = 0$ for every vertex $z \in Z$. It is clear that $Z, P, M$ are disjoint. Furthermore, the first two hypotheses of Lemma~\ref{lemma:topological-lemma} are clearly satisfied. The fourth hypothesis of Lemma~\ref{lemma:topological-lemma} is satisfied by the maximum principle. It remains to check the third hypothesis: fix any $z \in Z$. Since $z$ is adjacent to a face $F \in K$, there is some vertex $w$ adjacent to $F$ on which $f(w) \neq 0$. Choose $w$ to be the vertex closest to $z$ with this property, and let $\beta = \{z = w_0, w_1, \dots, w_\ell\}$ be a shortest path such that $w_\ell$ is adjacent to $w$. By minimality of $\beta$, we have $f(w_\ell) = 0$ and therefore $w_\ell$ is also adjacent to some vertex $y$ with $\sgn f(y) = -\sgn f(w)$. Then we set $p(z)$ to be the vertex in $\{w, y\}$ on which $f$ is positive, and $m(z)$ to be the other vertex in $\{w, y\}$. Since $f$ vanishes on $\beta$, we see that the third hypothesis of Lemma~\ref{lemma:topological-lemma} is satisfied.

Applying Lemma~\ref{lemma:topological-lemma}, we conclude that $|Z| \leq \alpha_{\mathcal{F}} |\gamma|$.
Hence, at least $(1- \alpha_{\mathcal{F}} )$ vertices in $\gamma$ are adjacent to a face in $\partial B_{m}$.
Each of these faces must be adjacent to a vertex for which $f$ is nonzero.
However, since $|\gamma| \geq n/2$, this contradicts Step 2 for $\eps_0$ sufficiently small, completing the proof.\qed

\begin{remark}
	The argument shows that the density hypothesis in Theorem~\ref{theorem:zero-case} may be replaced,
	as in Step 2, by a bound on the number of non-zero vertices adjacent to $\partial B_{2 n}$.
	In particular, this shows that if there are infinitely many contours surrounding the origin for
	which the harmonic function has a high density of zeros, then the function must be zero identically.
\end{remark}

\section{Exponential upper bound}\label{sec:bounded-case}
For the proof of Theorem~\ref{theorem:uniformly-bounded}, we have the additional hypothesis of uniform ellipticity,
$0 < \lambda < \cond < \Lambda$. This allows us to exploit the following observation: if $f(x)$ is small
and there is a neighbor $y \sim x$ for which $f(y)$ has large magnitude,  then there is a different neighbor $z \sim x$ with large magnitude
of the opposite sign.  Equipped with this observation we argue along similar lines to the proof of Theorem~\ref{theorem:zero-case}.
\smallskip

{\it Step 1: An exponential bound.} \\
In this step we prove that there exists $\eps_1 \in (0,1)$ such that if $\eps \leq \eps_1$
\begin{equation} \label{eq:exponential-bound}
	\max_{B_n} |f| \leq C \alpha^n \, ,
\end{equation}
for constant exponent $\alpha$ depending only on $\mathcal{F}$ and the ellipticity ratio $\Theta := \Lambda/\lambda$.

By the pigeonhole principle, for each $\delta \in (0,1)$, there is a sufficiently large $K(\delta)$
and an integer $A \in [1, K^n]$ for which $|\{v \in \G[B_{2 n }] : A < |f(v)| < 2 \Theta \mbox{(max degree in $\G$)} A\}| < \delta n $.
We may then repeat the proof of Theorem~\ref{theorem:zero-case}, treating $\{v \in \G[B_{2 n }] \mid |f(v)| \leq  A\}$
as the zero set. The only difference is that there is a $\delta$-fraction of ``zeros''
which do not satisfy condition (3) of Lemma~\ref{lemma:topological-lemma}, but since $\delta$ can be
made arbitrarily small, we may discard those points from $Z$ before applying Lemma~\ref{lemma:topological-lemma} and we have \eqref{eq:exponential-bound}.
\smallskip

{\it Step 2: Improve the exponent by covering.} \\
Following the argument at the end of the proof of~\cite[Theorem ($\mathrm A'$)]{buhovsky2022discrete}, we upgrade the exponential bound from Step 1 to a bound involving $\eps$. Let $\delta = C^{-1} \eps_1^{-1/2} \eps^{1/2}$ and cover $B_{n}$ by $\delta^2$ balls of radius $\delta n$ contained in $B_{ 2n }$, decreasing $\eps_0$ if necessary.  Observe that for each such ball $B_{\delta n}(z_i)$, we have
$|v \in B_{\delta n}(z_i) \mid  |f(v)| > 1 |  < C \eps \delta^{-2} |B_{\delta n}|  = \eps_1 |B_{\delta n}|$
and hence, by applying Step 1 to each such ball, we have $\max_{B_n} |f| \leq \alpha^{(C^{-1} \eps_1^{-1/2} ) \eps^{1/2} n} $
which completes the proof. \qed

\section{A non-planar counterexample on the square lattice}\label{sec:counterexample}
We present a collection of periodic conductances with crossing edges on $\Z^2$ for which Theorem~\ref{theorem:zero-case} fails.
For each vertex $x$ such that $x_1 + x_2$ is even, assign (undirected) conductances as follows
\[
\cond( x, x - e_2) \, , \, \cond(x, x-e_1) := A_1 \qquad \cond(x, x + e_2 ) \,  , \, \cond(x, x + e_1) := A_2
\]
\[
\cond(x, x + (2 , 2 )) \, , \, \cond(x, x - (2 , 2 )) := A_3 \qquad \cond(x, x + (1, 1)) \, , \, \cond(x, x - (1, 1)) := A_4 \,,
\]
see Figure~\ref{fig:periodic-counterexample}. Denote this graph by $(\Z^2, \mathcal{E}(A_1,A_2,A_3,A_4))$.

\begin{figure}
	\begin{center}
		\includegraphics[width=0.2\textwidth]{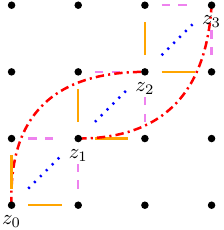}
	\end{center}
	\caption{Conductances $A_1,A_2,A_3,A_4$ are dashed violet lines, orange solid lines, red dash dotted lines, and blue dotted lines respectively.
	}\label{fig:periodic-counterexample}
\end{figure}

\begin{theorem}\label{theorem:counterexample}
	For each choice of positive $A_1 \neq A_2$
	and $A_3 > \frac{2 A_1^2 A_2^2}{{(A_1-A_2)}^2 (A_1+A_2)}$
	there is a choice of $A_4 > 0$ for which there is a harmonic
	function on $(\Z^2, \mathcal{E}(A_1,A_2,A_3,A_4))$ supported on the diagonal line $\{x_1 = x_2\}$.
\end{theorem}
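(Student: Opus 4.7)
The plan is to construct an explicit nonzero harmonic function $f$ of the form $f(n,n)=a_n$, with $f\equiv 0$ off the diagonal, by forcing $(a_n)$ to be a geometric sequence and then choosing $A_4$ so that the diagonal harmonic equation closes. The starting observation is that every edge in the graph either preserves the coordinate difference $x_1-x_2$ (the $\pm(1,1)$ and $\pm(2,2)$ edges) or changes it by exactly $\pm 1$ (the nearest-neighbour edges). Consequently any vertex with $|x_1-x_2|\geq 2$ has no diagonal neighbour, so $f$ vanishes on its entire neighbourhood and harmonicity there is automatic. Only the near-diagonal vertices $(n,n\pm1)$, $(n\pm1,n)$ and the diagonal vertices $(n,n)$ impose conditions on $(a_n)$.

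Next, I would write down harmonicity at a representative near-diagonal vertex, say $(n,n+1)$. This vertex is odd, hence has only four nearest-neighbour edges, and each of their conductances can be read off by locating the (unique) even endpoint and applying the rule in the definition. Two of the four neighbours of $(n,n+1)$ lie on the diagonal, and after substituting $f=0$ at the two off-diagonal neighbours, the harmonicity equation collapses to $A_1 a_{n+1}+A_2 a_n=0$. This forces $a_n=r^n a_0$ with $r:=-A_2/A_1$, and the other three near-diagonal families give the same recursion by the coordinate-swap symmetry of the graph.

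Finally, at $(n,n)$ the four nearest neighbours contribute zero, leaving the three-term recurrence
\begin{equation*}
2(A_1+A_2+A_3+A_4)a_n = A_3(a_{n+2}+a_{n-2})+A_4(a_{n+1}+a_{n-1}).
\end{equation*}
Plugging in $a_n=r^n$ and dividing by $r^n$ produces a single linear equation in $A_4$; letting $s:=r+r^{-1}=-(A_1^2+A_2^2)/(A_1A_2)$ and using $r^2+r^{-2}=s^2-2$, this solves to
\begin{equation*}
A_4 \;=\; \frac{A_3(s^2-4)-2(A_1+A_2)}{2-s}.
\end{equation*}
Since $s<0$ the denominator is positive, so $A_4>0$ is equivalent to $A_3(s^2-4)>2(A_1+A_2)$. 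The identity $s^2-4=(A_1^2-A_2^2)^2/(A_1A_2)^2$, which is nontrivial precisely because $A_1\neq A_2$, rearranges this inequality into the hypothesis $A_3>2A_1^2A_2^2/((A_1-A_2)^2(A_1+A_2))$. Setting $A_4$ equal to the displayed expression and choosing $a_0=1$ produces the required harmonic function supported on $\{x_1=x_2\}$. The only real work is the conductance bookkeeping in the second paragraph; once it is done correctly, the rest of the argument is forced and reproduces exactly the threshold on $A_3$ appearing in the theorem.
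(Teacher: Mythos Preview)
Your proposal is correct and follows exactly the paper's approach: the paper defines the sequence by $z_0=1$ and $A_1 z_i + A_2 z_{i-1}=0$ (your $r=-A_2/A_1$) and then simply asserts that ``a short computation'' yields harmonicity for some $A_4>0$. You have carried out precisely that computation, including the explicit solution for $A_4$ and the verification that the positivity condition on $A_4$ is equivalent to the stated lower bound on $A_3$.
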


\begin{proof}
	For the sequence $\{z_i\}_{i \in \Z}$ determined by the equalities $z_0 = 1$ and $A_1 z_{i}  + A_2 z_{i-1} = 0$,
	a short computation shows that
	the function $h(x_1, x_2) = 1_{\{ x_1 = x_2\}} z_{x_1}$
	is harmonic on $(\Z^2, \mathcal{E}(A_1,A_2,A_3,A_4))$ for some $A_4 > 0$.
\end{proof}

\appendix

\section{Exponential lower bound}

We follow the arguments of~\cite{buhovsky2022discrete} and prove the following exponential lower bound, which, when combined with Theorem~\ref{theorem:uniformly-bounded}, implies Theorem~\ref{theorem:liouville}. Let $Q_N$ denote the square centered at 0 of side length $2 N+1$, that is,~$Q_N := \{ (x,y) \in \mathbb{R}^2 : \max(|x|, |y| ) \leq N\}$.
\begin{theorem}[Theorem~(B) in~\cite{buhovsky2022discrete}]
	\label{theorem:lower-bound}
	There is some $b > 0$ such that the following holds. If $\varepsilon > 0$ is sufficiently small, $N$ is sufficiently large, $\max_{Q_{\lfloor \sqrt{N} \rfloor}} f \geq 2$, and
	\begin{equation*}
		\frac{|\{|f| \leq 1\} \cap Q_K|}{|Q_K \cap \V|} \geq 1-\eps,
	\end{equation*}
	for each $K \in [\sqrt{N}, 2N]$, then
	\begin{equation}
		\label{eq:exp-growth-statement}
		\max_{Q_{N}} |f| \geq \exp(bN) \, .
	\end{equation}
\end{theorem}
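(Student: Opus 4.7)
The plan is to adapt the strategy of \cite{buhovsky2022discrete}, which establishes the analogous result on $\Z^2$, to the periodic planar setting. At its core the argument rests on two ingredients: a discrete three-ball (Hadamard-type) inequality for $\Delta$-harmonic functions on $\G$, and an iteration scheme that converts the seed growth $\max_{Q_{\sqrt N}} f \geq 2$ into exponential growth on $Q_N$.

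First, I would establish a three-ball inequality of the shape
\[
\max_{Q_{2r}} |f| \;\leq\; C\,\bigl(\max_{Q_{r}} |f|\bigr)^{\alpha}\,\bigl(\max_{Q_{4r}} |f|\bigr)^{1-\alpha},
\]
valid for all $r \geq r_0(\mathcal F)$ and every $\Delta$-harmonic $f$, with $\alpha \in (0,1)$ and $C$ depending only on $\mathcal F$ and the ellipticity ratio $\Theta = \Lambda/\lambda$. Two natural routes are available. The first is to prove a discrete Carleman estimate on $\G$; weighted Carleman inequalities of this form have been derived on $\Z^2$ and on more general lattices, and their proofs rely only on uniform ellipticity of $\Delta$ together with a doubling/Poincar\'e-type geometry, both of which hold on any periodic planar graph. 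The second is to transfer the continuum three-ball inequality via quantitative homogenization: at scale $r$, $\Delta$-harmonic functions on $\G$ are close in a suitable norm to harmonic functions of the homogenized constant-coefficient elliptic operator on $\R^2$, for which Hadamard's inequality is classical.

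Second, I would iterate. Writing $M(K) := \max_{Q_K} |f|$ and rearranging the three-ball inequality gives
\[
\frac{M(4r)}{M(2r)} \;\geq\; C^{-\frac{1}{1-\alpha}}\left(\frac{M(2r)}{M(r)}\right)^{\frac{\alpha}{1-\alpha}}.
\]
The density hypothesis supplies the seed: on $Q_{\sqrt N}$ we have $\max f \geq 2$, while on every scale $K \in [\sqrt N, 2N]$ the hypothesis forces the $(1-\eps)$-quantile of $|f|$ to lie below $1$. Combined with a quantitative maximum principle this pins the base-scale ratio $M(2\sqrt N)/M(\sqrt N)$ to be at least $1 + c(\eps) > 1$, and the recurrence above then produces a growth ratio that tends to infinity as we climb through the $O(\log N)$ dyadic scales between $\sqrt N$ and $N$. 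Standard bookkeeping of the resulting second-order recursion in $\log M$ converts this into the lower bound $M(N) \geq \exp(bN)$ for a positive $b = b(\mathcal F, \Theta)$, following the endgame of the proof of Theorem~B in \cite{buhovsky2022discrete}.

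The principal obstacle is producing the discrete three-ball inequality with constants uniform in $r$: on $\Z^2$ with unit conductance \cite{buhovsky2022discrete} leverages the polynomial structure of discrete harmonic functions along diagonals, which is unavailable in the general periodic setting. Either of the two routes outlined above (a discrete Carleman estimate or quantitative homogenization) requires nontrivial analysis, but once the three-ball inequality is in hand all remaining steps are routine adaptations of the arguments in \cite{buhovsky2022discrete}.
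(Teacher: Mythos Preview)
Your overall plan --- a three-ball inequality followed by the iteration from \cite{buhovsky2022discrete} --- matches the paper's strategy, but the three-ball inequality you write down is the wrong one, and this is a genuine gap rather than a cosmetic difference. The classical Hadamard inequality $M(2r) \leq C\,M(r)^{\alpha} M(4r)^{1-\alpha}$ does not see the density hypothesis at all, and your seed argument (``a quantitative maximum principle pins $M(2\sqrt N)/M(\sqrt N) \geq 1 + c(\eps)$'') does not follow from the assumptions: knowing that $|f| \leq 1$ on most of $Q_{2\sqrt N}$ tells you nothing about $\max_{Q_{2\sqrt N}} |f|$. Even granting a seed ratio slightly above $1$, the recurrence $\rho_{k+1} \geq A\,\rho_k^{\alpha/(1-\alpha)}$ only blows up if $\alpha > 1/2$ \emph{and} $\rho_0$ exceeds the fixed point $A^{-(1-\alpha)/(2\alpha-1)}$, neither of which you have secured. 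What the iteration in \cite{buhovsky2022discrete} actually consumes (their Theorem~3.1, generalized here as Proposition~\ref{prop:three-ball}) is a \emph{propagation of smallness from a set of positive density}: if $|f| \leq 1$ on a $(1-\eps)$-fraction of $Q_N$ and $|f| \leq M$ on $Q_{4N}$, then $\max_{Q_{2N}} |f| \leq C M^{1/2} + C e^{-cN} M$. The density hypothesis is fed in at \emph{every} scale of the iteration, not just once as a seed.

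The paper's route to this density-aware inequality is also different from both of your suggestions. Rather than a Carleman estimate or homogenization, it approximates $f$ along each coset $v + \mathcal L$ by a polynomial of controlled degree (Lemma~\ref{lemma:poly-approx}), obtained by iterating the discrete Caccioppoli inequality --- this is the sole place where $\mathcal L$-invariance of $\cond$ is used, since it guarantees that lattice differences of harmonic functions remain harmonic. The density hypothesis then supplies many points where the approximating polynomial is small, and the discrete Remez inequality converts this into a sup bound. So your proposal needs to be reworked: replace the Hadamard inequality by the density-aware version, and either adapt one of your proposed methods to prove \emph{that} statement or follow the paper's polynomial-approximation-plus-Remez argument.
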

The proof in~\cite{buhovsky2022discrete} relied on a discrete three-ball inequality and our only modification is to prove this inequality in the more general setting of a periodic graph. The argument is effectively a simplified version of that in~\cite{armstrong2023large}.

We first approximate a harmonic function on $\G$ by a polynomial with periodic coefficients.
\begin{lemma}
	\label{lemma:poly-approx}
	For any $\alpha > 0$, there is a constant $c(\alpha) > 0$ such that, for every sufficiently large $R > 0$, natural number $m \leq c R$, and $v \in \V$, if $f$ is discrete harmonic in $Q_{3 R}$, then there exists a polynomial $p$ of degree $m$ such that
	\begin{equation}
		\label{eq:poly-approx}
		\|f-p\|_{L^\infty(Q_{cR} \cap (v + \mathcal{L}))} \leq \alpha^m \|f\|_{L^\infty(Q_{3 R})} \, .
	\end{equation}
\end{lemma}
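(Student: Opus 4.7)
The plan is to mimic the continuum fact that a harmonic function $u$ on $Q_{3R}$ differs from its degree-$m$ Taylor polynomial at the origin by $O((c/3)^{m+1})\|u\|_{L^\infty(Q_{3R})}$ on $Q_{cR}$. The discrete analog is obtained from a Cauchy-type estimate for discrete harmonic functions combined with a discrete Taylor expansion along the single coset $v+\mathcal L$.

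First I would establish a discrete Cauchy estimate. Because the conductances are $\mathcal L$-periodic, translation by a vector $e\in\mathcal L$ is a symmetry of $(\G,\cond)$, so the lattice difference $D_e f(x):=f(x+e)-f(x)$ is discrete harmonic whenever $f$ is. The Harnack / De~Giorgi--Nash--Moser inequality for uniformly elliptic equations on periodic planar graphs (uniform ellipticity is automatic here since the conductances are periodic and hence bounded above and below) yields the one-step gradient bound
\[
  \|D_e f\|_{L^\infty(Q_r)}\le \frac{C}{R-r}\,\|f\|_{L^\infty(Q_R)}.
\]
Iterating this $|\beta|$ times on a telescoping sequence of nested cubes $Q_{3R}\supset Q_{3R-R/|\beta|}\supset\cdots\supset Q_{2R}$, and using at each step that the successive lattice difference $D^\beta f$ is still discrete harmonic, gives
\[
  \|D^\beta f\|_{L^\infty(Q_{2R})}\le \Big(\frac{C|\beta|}{R}\Big)^{|\beta|}\|f\|_{L^\infty(Q_{3R})},\qquad |\beta|\le c(\alpha)R,
\]
with $C=C(\mathcal F)$.

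Second, I would form a discrete Taylor polynomial on the coset. Fix $v_0\in(v+\mathcal L)\cap Q_R$ near the origin and identify $v+\mathcal L$ with $\Z^2$ via generators $e_1,e_2$ of $\mathcal L$. Let $p(x)$ be the Newton interpolation polynomial on this $\Z^2$ determined by the values $\{D^\beta f(v_0):|\beta|\le m\}$. Written in the Euclidean coordinate $x\in\R^2$, $p$ is a polynomial of degree $\le m$, since the Newton falling-factorial basis is a linear change of variables from the monomial basis on $\Z^2$, and the $\Z^2$-coordinates on the coset are affine functions of $x$. The standard multivariate discrete Taylor remainder along a lattice path from $v_0$ to $x\in (v+\mathcal L)\cap Q_{cR}$, combined with the Cauchy bound above, gives
\[
  |f(x)-p(x)|\le C_1\Big(\frac{|x-v_0|}{R}\Big)^{m+1}\|f\|_{L^\infty(Q_{3R})},
\]
and choosing $c(\alpha)$ small enough that $C_1 c<\alpha$ produces the desired bound on $Q_{cR}\cap(v+\mathcal L)$.

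The main obstacle is executing Step 1 cleanly on a general periodic planar graph. Unlike on $\Z^d$, one cannot invoke translation invariance by a single edge vector, but $\mathcal L$-translation invariance \emph{is} available, and it suffices to keep the iterated lattice differences harmonic so that the Harnack estimate applies at every step. One must also carefully track the ``radius loss'' per iteration: the telescoping sum of radius decrements $R/|\beta|$ only reaches $Q_{2R}$ as long as $|\beta|\lesssim R$, which is exactly the regime allowed by the hypothesis $m\le cR$. A minor bookkeeping point is identifying the Newton-basis polynomial on the coset with a polynomial in the embedded Euclidean coordinate, which is routine since $\mathcal L$ has full rank in $\R^2$.
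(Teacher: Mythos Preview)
Your proposal is correct and follows essentially the same route as the paper's proof: exploit $\mathcal L$-periodicity so that lattice differences $D^\beta f$ remain harmonic, iterate an interior estimate to obtain $\|D^\beta f\|_{L^\infty}\le (C|\beta|/R)^{|\beta|}\|f\|_{L^\infty(Q_{3R})}$, and bound the discrete Taylor remainder of the Newton polynomial that matches $D^k f(v_0)$ for $|k|\le m$. The only cosmetic difference is that the paper derives the derivative bound by iterating the Caccioppoli inequality in $L^2$ and then passing to $L^\infty$ via a Moser estimate, whereas you invoke an $L^\infty$ gradient bound directly; note also that in your remainder the constant $C_1$ should really sit inside the $(m+1)$-th power (the $(m+1)!^{-1}$ from the Taylor formula and the $(C(m+1))^{m+1}$ from the Cauchy bound combine via Stirling), after which your choice of $c(\alpha)$ goes through.
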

\begin{proof}
	Fix generators $e_1, e_2 \in \mathcal{L}$. For $i=1,2$, let $D_i$ denote the forward-difference operator
	$D_i f (x) := f(x+e_i)-f(x) $.
	We write $D := (D_1, D_2)$.
	
	Iterating the discrete Caccioppoli inequality (see, \eg, \cite[Proposition 12]{MR3395463}) 
	and applying a discrete Moser estimate, \cite[Proposition 5.3]{MR1425544}, we obtain
	\begin{equation}
		\label{eq:derivative-bound}
		R \|D^m f \|_{L^\infty(Q_R)} \leq C \|D^m f \|_{L^2(Q_{2 R})} \leq {\left(\frac{Cm}{R}\right)}^m\| f \|_{L^2(Q_{3R})} \quad \forall m \leq c R \, .
	\end{equation}
	Here we use the fact that if $f $ is harmonic then $D f$ is also harmonic. This is the only place in our argument where $\mathcal{L}$-invariance of $\cond$ is used.
	
	Given $v \in \V$, choose $v_0 \in v + \mathcal{L}$ to be closest to the origin, breaking ties arbitrarily. Let $p \in \R[x_1, x_2]$ be the unique polynomial of degree $m$ such that
	$D^k p(v_0) = D^k f(v_0)$ for all $k \in [0, m]$.
	For any $z \in Q_{cR} \cap (v + \mathcal{L})$, integrating $m+1$ times from $v_0$ to $z$ yields
	\begin{equation*}
		|f(z)-p(z)| \leq \frac{{\left(cR\right)}^{m+1}}{(m+1)!} \sup_{Q_{cR + C(m + 1)} \cap (v + \mathcal{L})}|D^{m+1} f | \, .
	\end{equation*}
	The desired conclusion follows by applying~\eqref{eq:derivative-bound} at order $m+1$ and choosing $c(\alpha) > 0$ sufficiently small.
\end{proof}

\begin{prop}
	\label{prop:three-ball}
	There is some $\eps = \eps(\G)$ such that, if
	\begin{equation*}
		\frac{|\{|f| \leq 1\} \cap Q_N|}{|Q_N \cap \V|} \geq 1-\eps,
	\end{equation*}
	and $|f| \leq M$ on $Q_{4N}$, then
	\begin{equation}
		\label{eq:three-ball-statement}
		\max_{Q_{2N}} |f| \leq CM^{1/2} + C\exp(-cN)M \, ,
	\end{equation}
	where $c, C > 0$ are constants.
\end{prop}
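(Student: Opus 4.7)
The plan is to approximate $f$ by polynomials on each lattice coset via Lemma~\ref{lemma:poly-approx}, and then derive the three-ball estimate from classical polynomial inequalities --- specifically a Brudnyi--Ganzburg polynomial Remez inequality combined with Hadamard's three-circle theorem --- applied to the approximations.

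First I would fix a small $\alpha \in (0,1)$, let $c = c(\alpha)$ be the constant from Lemma~\ref{lemma:poly-approx}, and take $R = 4N/3$ (so that $Q_{3R} = Q_{4N}$) together with $m = \lfloor cR \rfloor$. The lemma then produces, for each coset $v \in \mathcal{F}$, a polynomial $p_v$ of degree $m \asymp N$ with
\[
  \|f - p_v\|_{L^\infty(Q_{cR} \cap (v + \mathcal{L}))} \leq \alpha^m M \leq e^{-\kappa N} M
\]
for some $\kappa = \kappa(\alpha) > 0$. The density hypothesis combined with this approximation gives $|p_v| \leq 1 + e^{-\kappa N} M$ on a $(1 - O(\eps))$-fraction of the lattice points in $Q_{cR} \cap (v + \mathcal{L})$; a Brudnyi--Ganzburg polynomial Remez inequality (after promoting the lattice bound to a continuous one via a polynomial Bernstein--Markov estimate) then upgrades this to $\max_{Q_{cR}} |p_v| \leq C(1 + e^{-\kappa N} M)$, provided $\eps$ is sufficiently small.

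Next I would apply Hadamard's three-circle theorem to the entire function $p_v$. The absolute bound $|f| \leq M$ on $Q_{4N}$, together with the pointwise approximation and polynomial extrapolation of $p_v$ from $Q_{cR}$ outward, provides an $L^\infty$ bound on $p_v$ at the outer radius. The three-circle theorem applied at the dyadic triple $(N, 2N, 4N)$ has exponent $\theta = 1/2$, and so yields
\[
  \max_{Q_{2N}} |p_v| \leq C M^{1/2} + C e^{-cN} M.
\]
Transferring this bound back to $f$ on all of $Q_{2N}$ is then accomplished by re-running the argument with Lemma~\ref{lemma:poly-approx} applied at various centers of a covering of $Q_{2N}$ by cubes of the approximation scale, and invoking the pointwise approximation at each.

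The principal obstacle is the scale mismatch: Lemma~\ref{lemma:poly-approx} only controls $p_v$ on the small cube $Q_{cR}$ with $c \ll 1$, whereas the target estimate lives on $Q_{2N}$. A naive polynomial extrapolation of $p_v$ from $Q_{cR}$ out to $Q_{4N}$ incurs a blow-up of $(4/c)^m = e^{O(N)}$, which must be tightly balanced against the exponentially small approximation error $\alpha^m = e^{-\kappa N}$ in order for the geometric-mean exponent $1/2$ from Hadamard's theorem on the dyadic triple $(N, 2N, 4N)$ to survive and produce the sought $M^{1/2}$ dependence.
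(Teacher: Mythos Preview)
Your Remez step does not go through, and this is the central gap. With the maximal-degree choice $m \asymp N$, any Remez/Brudnyi--Ganzburg inequality applied to a degree-$m$ polynomial bounded on a $(1-O(\eps))$-fraction of a cube (or of its lattice points) carries a blow-up factor of order $(C/\eps)^{m}$, or at best $T_m(1+c\eps)\asymp e^{c'm\sqrt{\eps}}$; in either case this is exponential in $N$ for any \emph{fixed} $\eps>0$. Taking $\eps$ small does not make the constant $O(1)$ --- you would need $\eps$ to decay with $N$, which the statement does not allow. So the asserted bound $\max_{Q_{cR}}|p_v|\le C(1+e^{-\kappa N}M)$ is false with $C$ independent of $N$; the correct bound is $e^{O(N)}(1+e^{-\kappa N}M)$. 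Once both the inner Remez bound and the outer extrapolation bound carry $e^{O(N)}$ prefactors, Hadamard's geometric mean preserves them and you land at $e^{O(N)}M^{1/2}$ rather than $CM^{1/2}$. The scale mismatch you flag at the end is real, but it is secondary to this degree-versus-Remez issue.

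The paper obtains the $M^{1/2}$ by a different mechanism, with no Hadamard and no two-dimensional Remez. Following \cite{buhovsky2022discrete}, it works on a single coset, one row at a time, and chooses the approximation degree \emph{adaptively in $M$}: set $m=\gamma N$ with $\gamma$ minimal so that $\alpha^{\gamma N}M\le 1$, i.e.\ $\gamma N\approx \log M/\log(1/\alpha)$. At this degree the approximation error is $\le 1$, so $|p|\le 2$ on at least half of the integer points of the row, and the one-dimensional discrete Remez inequality gives $|p|\le (16/(1-\gamma))^{\gamma N}$ on the doubled row. The point is that now the exponent $\gamma N$ is $O(\log M)$ rather than $O(N)$, and a line of algebra shows $(16/(1-\gamma))^{\gamma N}\le C\alpha^{-\gamma N/2}\le CM^{1/2}$ once $\alpha$ is fixed small enough. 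A separate case (degree $N/2$, valid when $M>e^{cN}$) produces the additive $Ce^{-cN}M$ term. The bounds are then propagated from rows to columns by iterating the same argument. Your polynomial-approximation lemma is used exactly as you propose; what changes is that the degree is tuned to $M$, not taken maximal.
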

\begin{proof}
	We closely follow the proof of~\cite[Theorem~3.1]{buhovsky2022discrete}, making a small modification to use Lemma~\ref{lemma:poly-approx} instead of the exact formula for the Poisson kernel on squares in $\Z^2$.
	
	As in~\cite{buhovsky2022discrete}, it suffices to prove the following statement which implies the desired result by a routine covering argument.
	
	\emph{There is some $\varepsilon > 0$ and $k \in \N$ such that, if $f$ is discrete harmonic with $|f| \leq M$ on $Q_{kN}$ and $|f| \leq 1$ on at least $(1-\eps)|Q_N \cap \V|$  vertices in $Q_{N}$, then $|f| \leq C(M^{1/2} + \exp(-cN)M)$ on $Q_{2N}$.}
	
	By choosing $\eps$ sufficiently small, it suffices to prove this statement only for vertices in the translated lattice $v + \mathcal{L}$ for some fixed $v \in \V$. Fix such a $v$ and choose vectors $e_1, e_2$ that generate $\mathcal{L}$. Without loss of generality, assume that $e_1 = (1, 0)$ and $e_2 = (0, 1)$. Consider an integer $t \in [-N, N]$ where $|f(v + se_1 + te_2)| \leq 1$ for at least half of integers $s \in [-N, N]$. We estimate $\sup_{s \in [-2N, 2N]} |f(v + se_1 + te_2)|$ and then repeat, propagating the bounds from the horizontal direction to the vertical direction.
	
	Let $\beta < 0$ and $\alpha \in (0, 1)$ be constants chosen at the end of the proof. At each occurrence of these parameters, we will make a note of any necessary conditions.
	
	We consider two cases, depending on the size of $M$.
	\begin{enumerate}
		\item
		Suppose $M\exp(\beta N) \leq 1$ and choose $\gamma > 0$ to be minimal such that $\gamma N$ is an integer and
		\begin{equation*}
			\alpha^{\gamma N}M \leq 1 \, .
		\end{equation*}
		By the assumption of this case and minimality of $\gamma$, we conclude that $\gamma \leq \frac{\beta}{\log \alpha} + \frac{1}{N}$.
		Let $p$ be the polynomial given by Lemma~\ref{lemma:poly-approx} of degree $m=\gamma N$, which satisfies, for $k$ sufficiently large,
		\begin{equation*}
			\|f-p\|_{L^\infty(Q_{N} \cap (v + \mathcal{L}))} \leq \alpha^{\gamma N}M \, .
		\end{equation*}
		By a discrete version of Remez's inequality~\cite[Corollary~2.2]{buhovsky2022discrete}, we deduce
		\begin{equation*}
			|p(v + se_1 + te_2)| \leq 2{\left(\frac{16N}{(1-\gamma)N}\right)}^{\gamma N}, \quad \forall s \in [-2N, 2N] \, .
		\end{equation*}
		Combining the previous three displays, for each $s \in [-2N, 2N]$ we get
		\begin{equation*}
			|f(v + se_1 + te_2)| \leq 2{\left(\frac{16N}{(1-\gamma)N}\right)}^{\gamma N} + 1 \leq 3 \alpha^{-\gamma N/2} \leq \frac{3}{\alpha} M^{1/2} \, ,
		\end{equation*}
		where the second inequality above holds as long as $\log \frac{16}{1 - \gamma} \leq -\frac{1}{2}\log \alpha$ and the third follows by minimality of $\gamma$. In turn, $\log \frac{16}{1 - \gamma} \leq -\frac{1}{2}\log \alpha$ holds if $\gamma \leq \frac{3}{4}$ (which holds for $N \geq 4$ when $\frac{\beta}{\log \alpha} \leq \frac12$) and $\log 64 \leq -\frac{1}{2} \log \alpha$.
		\item
		
		Otherwise, assume that $\delta := M\exp(\beta N) > 1$. In this case, approximate $u$ instead by a polynomial of degree $\frac{1}{2}N$, given by Lemma~\ref{lemma:poly-approx}, with error
		\begin{equation*}
			\|f-p\|_{L^\infty(Q_{N} \cap (v + \mathcal{L}))} \leq \exp(\beta N)M \, .
		\end{equation*}
		By the assumption of this case and the above inequality, we have $|p(v + se_1 + te_2)| \leq 2\delta$ for at least half of the $s \in [-N, N]$. Applying~\cite[Corollary~2.2]{buhovsky2022discrete} again, we get, for each $s \in [-2N, 2N]$,
		\begin{equation*}
			|p(v + se_1 + te_2)| \leq 2\delta {\left(\frac{32N}{N}\right)}^{N/2} \, .
		\end{equation*}
		Combining the previous two displays, we get, for each $s \in [-2N, 2N]$,
		\[
		|f(v + se_1 + te_2)| \leq {32}^{N/2}2\delta + \exp(\beta N)M \leq C\exp(-cN)M \, ,
		\]
		where we use the fact that $\beta < -\log(32)/2$.
	\end{enumerate}
	We observe that the desired constraints are satisfied for $\beta := -\log 32$ and $\alpha := 2^{-12}$.
\end{proof}

\begin{proof}[Proof of Theorem~\ref{theorem:lower-bound}]
	Follow the proof of Theorem~(B) in~\cite{buhovsky2022discrete}, replacing each reference to Theorem~3.1 (the discrete three-circle theorem) with Proposition~\ref{prop:three-ball}, which is the same statement generalized to periodic planar graphs.
\end{proof}

\section*{Acknowledgments}
Thank you to Scott Armstrong, Paul Dario, Lionel Levine, and Charles Smart for useful discussions and suggestions.
A.B. was partially supported by NSF grant DMS-2202940 and a Stevanovich fellowship.
W.C. was partially supported by NSF grant DMS-2303355.
S.G. was partially supported by NSF grants DMS-1855688,
DMS-1945172, and a Sloan Fellowship.

{
	\bibliographystyle{alpha}
	\bibliography{refs}
}

\end{document}